\newtheorem{theorem}{Theorem}
\newtheorem{lemma}[theorem]{Lemma}
\newenvironment{proof}[1][Proof]{\noindent\textit{#1.} }{\ \rule{0.5em}{0.5em}}
\begin{document}

\title{Non-degeneracy of some Sobolev Pseudo-norms of fractional Brownian
motion}
\author[1]{Yaozhong Hu}
\author[2]{Fei Lu}
\author[1]{David Nualart}
\affil[1]{Department of Mathematics, University of Kansas }
\affil[2]{Mathematics Department, Lawrence Berkeley National Laboratory}

\renewcommand\Authands{ and }
\maketitle

\begin{abstract}
Applying an upper bound estimate for small $L^{2}$ ball probability for
fractional Brownian motion (fBm), we prove the non-degeneracy of some
Sobolev pseudo-norms of fBm.
\end{abstract}

\textbf{Keywords:} non-degeneracy; Malliavin calculus; fractional Brownian
motion; small deviation (small ball probability).

\section{Introduction}

Let $B^{H}=\left\{ B_{t}^{H}:t\in \lbrack 0,1]\right\} $ be a fractional
Brownian motion (fBm) on $\left( \Omega ,\mathcal{F},P\right) $. That is, $%
\left\{ B_{t}^{H}:t\geq 0\right\} $ is a centered Gaussian process with
covariance%
\begin{equation}
R_{H}(t,s)=E(B_{t}^{H}B_{s}^{H})=\frac{1}{2}(\left\vert t\right\vert
^{2H}+\left\vert s\right\vert ^{2H}-\left\vert t-s\right\vert ^{2H}),
\label{CovfBm}
\end{equation}%
where $H\in (0,1)$ is the Hurst parameter. Consider the random variable $F$
given by a functional of $B^{H}$:%
\begin{equation}
F=\int_{0}^{1}\int_{0}^{1}\frac{\left\vert B_{t}^{H}-B_{t^{\prime
}}^{H}\right\vert ^{2p}}{\left\vert t-t^{\prime }\right\vert ^{q}}%
dtdt^{\prime },  \label{Sob-norm}
\end{equation}%
where $p,q\geq 0$ satisfy $(2p-2)H>q-1$.

In the case of $H=\frac{1}{2}$, $B^{H}$ is a Brownian motion, and the random
variable $F$ is the Sobolev norm on the Wiener space considered by Airault
and Malliavin in \cite{AM88}. This norm plays a central role in the
construction of surface measures on the Wiener space. Fang \cite{Fang91}
showed that $F$ is non-degenerate in the sense of Malliavin calculus (see
the definition below). Then it follows from the well-known criteria on
regularity of densities that the law of $F$ has a smooth density.

The purpose of this note is to extend this result to the case $H\neq \frac{1%
}{2}$ and to show that $F$ is non-degenerate. 

In order to state our result precisely, we need some notations from
Malliavin calculus (for which we refer to Nualart \cite[Section 1.2]{Nu06}).
Denote by $\mathcal{E}$ the set of all step functions on $[0,1]$. Let $%
\mathfrak{H}$ be the Hilbert space defined as closure of $\mathcal{E}$ with
respect to the scalar product%
\begin{equation*}
\langle \mathbf{1}_{[0,t]},\mathbf{1}_{[0,s]}\rangle _{\mathfrak{H}%
}=R_{H}(t,s),\text{ for }s,t\in \lbrack 0,1].
\end{equation*}%
Then the mapping $\mathbf{1}_{[0,t]}\mapsto B_{t}^{H}$ extends to a linear
isometry between $\mathfrak{H}$ and the Gaussian space spanned by $B^{H}$.
We denote this isometry by $B^{H}$. Then, for any $h,g\in \mathfrak{H}$, $%
B^{H}(f)$ and $B^{H}(g)$ are two centered Gaussian random variables with $%
E[B^{H}(h)B^{H}(g)]=\left\langle h,g\right\rangle _{\mathfrak{H}}$. We
define the space $\mathbb{D}^{1,2}$ as the closure of the set of smooth and
cylindrical random variable of the form 
\begin{equation*}
G=f(B^{H}(h_{1}),\dots ,B^{H}(h_{n}))
\end{equation*}%
with $h_{i}\in \mathfrak{H}$, $f\in C_{p}^{\infty }(\mathbb{R}^{n})$ ($f$
and all its partial derivatives has polynomial growth) under the norm%
\begin{equation*}
\left\Vert G\right\Vert _{1,2}=\sqrt{E[G^{2}]+E[\left\Vert DG\right\Vert _{%
\mathfrak{H}}^{2}]},
\end{equation*}%
where the $DF$ is the Malliavin derivative of $F$ defined as 
\begin{equation*}
DG=\sum_{i=1}^{n}\frac{\partial f}{\partial x_{i}}(B^{H}(h_{1}),\dots
,B^{H}(h_{n}))h_{i}.
\end{equation*}

We say that a random vector $\mathbf{V=}\left( V_{1},\dots ,V_{d}\right) $
whose components are in $\mathbb{D}^{1,2}$ is \emph{non-degenerate} if its
Malliavin matrix $\gamma _{\mathbf{V}}=\left( \left\langle
DV_{i},DV_{j}\right\rangle _{\mathfrak{H}}\right) $ is invertible a.s. and $%
(\det \gamma _{\mathbf{V}})^{-1}\in L^{p}(\Omega ),$ for all $p\geq 1$ (see
for instance \cite[Definition 2.1.1]{Nu06}). Our main result is the
following theorem.

\begin{theorem}
\label{Main-Non-deg}For all $H\in (0,1)$, the functional $F$ of a fBm $B^{H}$
given in $(\ref{Sob-norm})$ is non-degenerate. That is, 
\begin{equation}
\left\Vert DF\right\Vert _{\mathfrak{H}}^{-1}\in L^{k}(\Omega ),\text{ for
all }k\geq 1.  \label{Non-deg-Th}
\end{equation}
\end{theorem}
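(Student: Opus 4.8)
By a standard argument, $(\ref{Non-deg-Th})$ follows once we show that $P(\|DF\|_{\mathfrak H}<\epsilon)$ decays faster than every power of $\epsilon$ as $\epsilon\downarrow0$. The plan is to (i) turn $\|DF\|_{\mathfrak H}^{2}$ into an explicit quadratic form in one scalar function, then (ii) bound it below via the spectral decay of the covariance operator of $B^{H}$, and (iii) absorb the residual probability into the small $L^{2}$-ball estimate for fBm. Since $DB_{t}^{H}=\mathbf 1_{[0,t]}$, one has (granting $F\in\mathbb D^{1,2}$, which the hypothesis $(2p-2)H>q-1$ secures)
\[
DF=2p\int_{0}^{1}\!\!\int_{0}^{1}\Phi(t,t')\,(\mathbf 1_{[0,t]}-\mathbf 1_{[0,t']})\,dt\,dt',\qquad \Phi(t,t'):=\frac{|B_{t}^{H}-B_{t'}^{H}|^{2p-2}(B_{t}^{H}-B_{t'}^{H})}{|t-t'|^{q}} ,
\]
and the kernel $\Phi$ is \emph{antisymmetric}. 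Introducing an independent copy $\widetilde B^{H}$ of $B^{H}$ and using $\langle\mathbf 1_{[0,t]}-\mathbf 1_{[0,t']},\mathbf 1_{[0,s]}-\mathbf 1_{[0,s']}\rangle_{\mathfrak H}=E[(\widetilde B_{t}^{H}-\widetilde B_{t'}^{H})(\widetilde B_{s}^{H}-\widetilde B_{s'}^{H})]$, one gets $\|DF\|_{\mathfrak H}^{2}=4p^{2}\,E_{\widetilde B}\big[\big(\int\!\int\Phi(t,t')(\widetilde B_{t}^{H}-\widetilde B_{t'}^{H})\,dt\,dt'\big)^{2}\big]$; by antisymmetry the inner integral equals $2\int_{0}^{1}\psi(t)\,\widetilde B_{t}^{H}\,dt$ with $\psi(t):=\int_{0}^{1}\Phi(t,s)\,ds$, whence, with $\mathcal R$ the integral operator on $L^{2}[0,1]$ having kernel $R_{H}$,
\[
\|DF\|_{\mathfrak H}^{2}=16p^{2}\,\langle\psi,\mathcal R\psi\rangle_{L^{2}[0,1]} .
\]
The same antisymmetry yields $\int_{0}^{1}\psi=0$ and $\langle\psi,B^{H}\rangle_{L^{2}}=\langle\psi,B^{H}-\bar B^{H}\rangle_{L^{2}}=\tfrac12 F$, where $\bar B^{H}=\int_{0}^{1}B_{r}^{H}\,dr$.

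\noindent\textbf{Spectral lower bound.}
Because $\psi$ is $L^{2}$-orthogonal to constants, $\langle\psi,\mathcal R\psi\rangle=\langle\psi,\mathcal R_{0}\psi\rangle$, where $\mathcal R_{0}$ is the covariance operator of the centered process $B^{H}-\bar B^{H}$. Let $\mu_{1}\ge\mu_{2}\ge\cdots>0$ and $(f_{n})$ be its eigenvalues and eigenfunctions; it is known that $\mu_{n}\asymp n^{-(2H+1)}$ (by Cauchy interlacing from the known asymptotics of the eigenvalues of the fBm covariance operator), so $\mu_{N}\ge cN^{-(2H+1)}$ and $\sum_{n>N}\mu_{n}\le CN^{-2H}$. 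Let $Q_{N}$ be the orthogonal projection onto $f_{1},\dots,f_{N}$ and $E_{N}:=\langle\psi,(I-Q_{N})(B^{H}-\bar B^{H})\rangle_{L^{2}}$, so $|E_{N}|\le\|\psi\|_{L^{2}}\,\|(I-Q_{N})(B^{H}-\bar B^{H})\|_{L^{2}}$. From $\langle\psi,B^{H}-\bar B^{H}\rangle_{L^{2}}=\tfrac12F$ and Cauchy--Schwarz, $\|Q_{N}\psi\|_{L^{2}}\ge(\tfrac12F-|E_{N}|)/\|B^{H}-\bar B^{H}\|_{L^{2}}$, hence on the event $\{|E_{N}|\le\tfrac14F\}$,
\[
\|DF\|_{\mathfrak H}^{2}\ \ge\ 16p^{2}\,\mu_{N}\,\|Q_{N}\psi\|_{L^{2}}^{2}\ \ge\ c\,p^{2}\,N^{-(2H+1)}\,\frac{F^{2}}{\|B^{H}-\bar B^{H}\|_{L^{2}}^{2}} .
\]

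\noindent\textbf{Probabilistic inputs and conclusion.}
The three inputs are: (a) $P(F<\delta)\le\exp(-c\delta^{-\gamma})$ for some $\gamma>0$ -- indeed $F\ge\int\!\int|B_{t}^{H}-B_{t'}^{H}|^{2p}$, which dominates $(2\|B^{H}-\bar B^{H}\|_{L^{2}}^{2})^{p}$ for $p\ge1$ (Jensen) and $c_{p}\|B^{H}-\bar B^{H}\|_{L^{2}}^{2}\,\mathrm{Osc}(B^{H})^{2p-2}$ for $p<1$ (truncating at level $\sim\mathrm{Osc}(B^{H})$); conditioning on $\mathrm{Osc}(B^{H})$ and a H\"older seminorm of $B^{H}$ (both Gaussian-tailed), using $B_{0}^{H}=0$ to pass from ``$B^{H}$ nearly constant'' to ``$\|B^{H}\|_{L^{2}}$ small'', and invoking the small $L^{2}$-ball upper bound for fBm gives (a). (b) $\|\psi\|_{L^{2}}$ has moments of all orders and $\|B^{H}-\bar B^{H}\|_{L^{2}}$ has a Gaussian tail (from $E|B_{t}^{H}-B_{s}^{H}|^{2p-1}\asymp|t-s|^{(2p-1)H}$ together with $(2p-1)H>q-1$, and Fernique). (c) $\|(I-Q_{N})(B^{H}-\bar B^{H})\|_{L^{2}}^{2}=\sum_{n>N}\langle B^{H}-\bar B^{H},f_{n}\rangle^{2}$ is a sum of independent squared centered Gaussians with variances $\mu_{n}$, hence has mean $\le CN^{-2H}$ and $P(\|(I-Q_{N})(B^{H}-\bar B^{H})\|_{L^{2}}>N^{-H+\eta})\le\exp(-cN^{1+\eta})$ for fixed $\eta\in(0,H)$. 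Setting $N=\lfloor\epsilon^{-\theta}\rfloor$, the spectral bound gives
\[
P(\|DF\|_{\mathfrak H}<\epsilon)\ \le\ P\big(|E_{N}|>\tfrac14F\big)\ +\ P\big(c\,p^{2}N^{-(2H+1)}F^{2}<\epsilon^{2}\|B^{H}-\bar B^{H}\|_{L^{2}}^{2}\big) .
\]
In the second term $\epsilon N^{(2H+1)/2}=\epsilon^{\,1-\theta(2H+1)/2}$; splitting off $\{\|B^{H}-\bar B^{H}\|_{L^{2}}>\epsilon^{-\kappa}\}$ and using (a)--(b) bounds it by $\exp(-c\epsilon^{-\alpha_{1}})$ provided $\theta$ is not too large. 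In the first term, $\{|E_{N}|>\tfrac14F\}\subset\{\|\psi\|_{L^{2}}>\epsilon^{-\kappa}\}\cup\{\|(I-Q_{N})(B^{H}-\bar B^{H})\|_{L^{2}}>N^{-H+\eta}\}\cup\{F<4\epsilon^{-\kappa}N^{-H+\eta}\}$, which by (a)--(c) is $\le\exp(-c\epsilon^{-\alpha_{2}})$ provided $\theta$ is not too small; for suitable small $\kappa,\eta,\theta>0$ the two ranges of $\theta$ overlap. Therefore $P(\|DF\|_{\mathfrak H}<\epsilon)\le\exp(-c\epsilon^{-\alpha})$ for some $\alpha>0$, which gives $(\ref{Non-deg-Th})$.

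\noindent\textbf{Where the difficulty lies.}
The reduction $\|DF\|_{\mathfrak H}^{2}=16p^{2}\langle\psi,\mathcal R\psi\rangle$ and the identity $\langle\psi,B^{H}\rangle_{L^{2}}=\tfrac12F$ are short computations once the antisymmetry of $\Phi$ is exploited, and the spectral truncation is routine given the eigenvalue asymptotics of the fBm covariance operator. The genuine obstacle is input (a): making ``$F$ small $\Rightarrow$ $B^{H}$ essentially constant'' uniform over the whole admissible parameter range -- in particular for $p<1$, where $x\mapsto|x|^{2p-2}x$ is singular and where one must also establish $F\in\mathbb D^{1,2}$ and the precise formula for $DF$ -- and then feeding this into exactly the small $L^{2}$-ball probability estimate for fractional Brownian motion announced in the abstract.
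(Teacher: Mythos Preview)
Your argument is essentially correct and takes a genuinely different route from the paper. The paper follows Fang's scheme: it diagonalizes the two–dimensional operator $Q$ on $L^{2}(I)$ with kernel $\langle\mathbf 1_{[t',t]},\mathbf 1_{[s',s]}\rangle_{\mathfrak H}$, writes $\|DF\|_{\mathfrak H}^{2}=16p^{2}\sum_{i}\lambda_{i}V_{i}^{2}$, and then proves that the finite vector $(V_{1},\dots,V_{n})$ is itself non-degenerate by taking a \emph{second} Malliavin derivative, reducing the smallest eigenvalue of its Malliavin matrix to the functional $G_{\beta}=\int_{I}\Psi_{\beta}^{2}|B_{t}^{H}-B_{t'}^{H}|^{2p-2}|t-t'|^{-q}d\vec t$, and controlling $P(G_{\beta}\le\varepsilon)$ via the path–variance small ball estimate of Nazarov--Nikitin together with a covering of $S^{n-1}$; the conclusion then comes from the smooth density of $(V_{1},\dots,V_{n})$. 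Your approach instead exploits the antisymmetry of $\Phi$ to collapse to a one–dimensional problem, obtaining the clean identities $\|DF\|_{\mathfrak H}^{2}=16p^{2}\langle\psi,\mathcal R\psi\rangle$ and $\langle\psi,B^{H}\rangle=\tfrac12 F$, and then bounds $\|DF\|_{\mathfrak H}$ below directly in terms of $F$ after a spectral truncation of the (centered) fBm covariance operator. The trade-off is in the external inputs: the paper needs only the Nazarov--Nikitin estimate, whereas you need Bronski's two-sided eigenvalue asymptotics $\mu_{n}\asymp n^{-(2H+1)}$, concentration for the Karhunen--Lo\`eve tail, and the small ball for $F$ itself (which the paper quotes from Li--Shao but does not use). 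In return you avoid the second differentiation and the covering argument entirely, and you obtain a stretched-exponential bound $P(\|DF\|_{\mathfrak H}<\varepsilon)\le\exp(-c\varepsilon^{-\alpha})$ rather than the arbitrary-polynomial bound the paper records. Two places where your sketch should be tightened: the moment bound on $\|\psi\|_{L^{2}}$ via a H\"older seminorm tacitly needs $p\ge\tfrac12$ (indeed so does the very formula for $DF$), and the argument for input~(a) when $p<1$ is only outlined---since Li--Shao's estimate already covers that range (away from $q=1$), it is cleaner to cite it directly rather than redo it.
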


We shall follow the same scheme introduced in \cite{Fang91} to prove Theorem %
\ref{Main-Non-deg}. That is, it suffices to prove that for any integer $n$,
there exists a constant $C_{n}$ such that 
\begin{equation}
P(\left\Vert DF\right\Vert _{\mathfrak{H}}\leq \varepsilon )\leq
C_{n}\varepsilon ^{n}  \label{DF-SBP}
\end{equation}%
for all $\varepsilon $ small. This kind of inequality is called upper bound
estimate in small deviation theory (also called small ball probability
theory, for which we refer to \cite{LS01} and the reference therein). To
prove (\ref{DF-SBP}), we will need an upper bound estimate of the small
deviation for the path variance of the fBm, which is introduced in the
following section.

We comment that Li and Shao \cite[Theorem 4]{LS99} proved that 
\begin{equation}
P\left( \int_{0}^{1}\int_{0}^{1}\frac{\left\vert
B_{t}^{H}-B_{s}^{H}\right\vert ^{2p}}{\left\vert t-s\right\vert ^{q}}%
dtds\leq \varepsilon \right) \leq \exp \{-\frac{C}{\varepsilon ^{\beta }}\}
\label{L-S99}
\end{equation}%
for $p>0$, $0\leq q<1+2pH$, $q\neq 1$ and $\beta =1/(pH-\max \left\{
0,q-1\right\} $. But (\ref{L-S99}) gives the small ball probability of $F$,
not\ of $\left\Vert DF\right\Vert _{\mathfrak{H}}$.

\section{An estimate on the path variance of fBm}

In this section we show the following useful lemma.

\begin{lemma}[Estimate of the path variance of the fBm]
\label{PathVar}Let $B^{H}=\left\{ B_{t}^{H}:t\in \lbrack 0,1]\right\} $ be a
fBm. For $0\leq a<b\leq 1$, consider the path variance $V_{[a,b]}(B^{H})$
defined by%
\begin{equation*}
V_{[a,b]}(B^{H})=\int_{a}^{b}\left\vert B_{t}^{H}\right\vert ^{2}\frac{dt}{%
b-a}-(\int_{a}^{b}B_{t}^{H}\frac{dt}{b-a})^{2}.
\end{equation*}%
Then for $c_{H}=H\left( (2H+1)\sin \frac{\pi }{2H+1}\right) ^{-\frac{2H+1}{2H%
}}\left( \Gamma (2H+1)\sin (\pi H)\right) ^{\frac{1}{2H}}$, 
\begin{equation}
\lim_{\varepsilon \rightarrow 0}\varepsilon ^{\frac{1}{H}}\log
P(V_{[a,b]}(B^{H})\leq \varepsilon ^{2})=-(b-a)c_{H}.  \label{PathVar0}
\end{equation}
\end{lemma}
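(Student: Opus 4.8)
The plan is to combine the $H$-self-similarity of $B^{H}$ with the classical small-$L^{2}$-ball asymptotics for Gaussian processes whose Karhunen--Lo\`eve spectrum decays polynomially. First, note that $V_{[a,b]}(B^{H})$ is unchanged when one subtracts the constant $B^{H}_{a}$ from the path, so $V_{[a,b]}(B^{H})=V_{[a,b]}(B^{H}-B^{H}_{a})$; by stationarity of the increments and $H$-self-similarity the rescaled process $u\mapsto(b-a)^{-H}(B^{H}_{a+(b-a)u}-B^{H}_{a})$, $u\in[0,1]$, is again a standard fBm, whence $V_{[a,b]}(B^{H})\stackrel{d}{=}(b-a)^{2H}V_{[0,1]}(B^{H})$. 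Setting $\delta=\varepsilon(b-a)^{-H}$ this gives
\[
\varepsilon^{1/H}\log P\big(V_{[a,b]}(B^{H})\le\varepsilon^{2}\big)=(b-a)\,\delta^{1/H}\log P\big(V_{[0,1]}(B^{H})\le\delta^{2}\big),
\]
so it remains to prove $\lim_{\delta\to0}\delta^{1/H}\log P(V_{[0,1]}(B^{H})\le\delta^{2})=-c_{H}$.

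Next, since $\int_{0}^{1}B^{H}_{s}\,ds$ is the $L^{2}[0,1]$-orthogonal projection of $B^{H}$ onto the constants, we may write $V_{[0,1]}(B^{H})=\|X\|^{2}_{L^{2}[0,1]}$ with $X_{t}=B^{H}_{t}-\int_{0}^{1}B^{H}_{s}\,ds$ a centered Gaussian process. Its covariance operator is $\widetilde R=QR_{H}Q$, where $R_{H}$ is the trace-class integral operator on $L^{2}[0,1]$ with kernel $R_{H}(t,s)$ and $Q=I-P$, with $P$ the rank-one projection onto $\mathrm{span}\{\mathbf 1\}$. By the Karhunen--Lo\`eve expansion, $V_{[0,1]}(B^{H})\stackrel{d}{=}\sum_{n\ge1}\mu_{n}\xi_{n}^{2}$ with $(\xi_{n})$ i.i.d.\ $N(0,1)$ and $\mu_{1}\ge\mu_{2}\ge\cdots>0$ the nonzero eigenvalues of $\widetilde R$. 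It is classical that the eigenvalues of $R_{H}$ satisfy $\lambda_{n}\sim\phi\,n^{-(2H+1)}$ as $n\to\infty$, with $\phi=\pi^{-(2H+1)}\Gamma(2H+1)\sin(\pi H)$ (reducing to $\lambda_{n}=\pi^{-2}(n-\tfrac12)^{-2}$ when $H=\tfrac12$); and since $\widetilde R$ is the compression of $R_{H}$ to the codimension-one subspace $\mathbf 1^{\perp}$, the Courant--Fischer min-max principle yields the interlacing $\lambda_{n+1}\le\mu_{n}\le\lambda_{n}$, so $\mu_{n}\sim\phi\,n^{-(2H+1)}$ as well. Put $\alpha=2H+1>1$.

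Then I would invoke the classical small-deviation asymptotics for $\sum\mu_{n}\xi_{n}^{2}$: when $\mu_{n}\sim\phi n^{-\alpha}$ with $\alpha>1$,
\[
\lim_{\delta\to0}\delta^{2/(\alpha-1)}\log P\Big(\sum_{n\ge1}\mu_{n}\xi_{n}^{2}\le\delta^{2}\Big)=-\,\frac{\alpha-1}{\alpha^{\alpha/(\alpha-1)}}\left(\frac{\pi\,(2\phi)^{1/\alpha}}{2\sin(\pi/\alpha)}\right)^{\alpha/(\alpha-1)}.
\]
The upper bound for the probability is the elementary exponential Chebyshev estimate $P(\sum\mu_{n}\xi_{n}^{2}\le\delta^{2})\le e^{u\delta^{2}}\prod_{n}(1+2u\mu_{n})^{-1/2}$ optimized over $u>0$, using $\tfrac12\sum_{n}\log(1+2u\mu_{n})\sim\tfrac12(2u\phi)^{1/\alpha}\int_{0}^{\infty}\log(1+t^{-\alpha})\,dt$ as $u\to\infty$ together with $\int_{0}^{\infty}\log(1+t^{-\alpha})\,dt=\alpha\int_{0}^{\infty}(1+t^{\alpha})^{-1}dt=\pi/\sin(\pi/\alpha)$ (this is where the factor $\sin(\pi/(2H+1))$ in $c_{H}$ originates); this already has the right exponent and constant, and the matching lower bound is the assertion that the Chebyshev bound is log-sharp, which is the de Bruijn--type Tauberian theorem of small-deviation theory and uses only the first-order spectral asymptotics found above. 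Substituting $\alpha=2H+1$ and $\phi=\pi^{-(2H+1)}\Gamma(2H+1)\sin(\pi H)$, noting $2/(\alpha-1)=1/H$ and $\tfrac1{2H}=\tfrac{\alpha}{\alpha-1}-1$, the right-hand side simplifies to exactly $-c_{H}$; together with the reduction in the first paragraph this proves $(\ref{PathVar0})$.

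The main obstacle is the lower bound $P(V_{[0,1]}(B^{H})\le\delta^{2})\ge\exp\big(-(c_{H}+o(1))\,\delta^{-1/H}\big)$: the Chebyshev upper bound is routine and, as noted, already carries the right constant, but showing it is asymptotically tight requires the full saddle-point/Tauberian analysis of $\sum\mu_{n}\xi_{n}^{2}$ and the fact that this analysis is governed solely by the leading term $\mu_{n}\sim\phi n^{-\alpha}$. A secondary, technical point is justifying that passing from $R_{H}$ to its compression $\widetilde R$ preserves the first-order spectral asymptotics (handled by the interlacing inequality), together with the bookkeeping in the algebraic reduction of the constant.
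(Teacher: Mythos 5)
Your proposal is correct, and the constant checks out: with $\alpha=2H+1$ and $\phi=\pi^{-(2H+1)}\Gamma(2H+1)\sin(\pi H)$ your Tauberian formula does simplify to exactly $c_{H}$, and the reduction $V_{[a,b]}(B^{H})\stackrel{d}{=}(b-a)^{2H}V_{[0,1]}(B^{H})$ via stationary increments and self-similarity correctly produces the factor $(b-a)$. However, your route is genuinely different from the paper's. The paper disposes of the lemma in three lines by citing the general theorem of Nazarov and Nikitin, which gives the $L^{2}$ small-ball asymptotics for $\int_{0}^{1}(B^{H}_{t}-G)^{2}\psi(t)\,dt$ with an arbitrary weight $\psi$ and an arbitrary square-integrable shift $G$; the interval $[a,b]$ is handled there by rescaling to $[a/b,1]$ and taking $\psi=\mathbf{1}_{[a/b,1]}$, with the random mean absorbed into $G$. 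You instead reduce to the unweighted case on $[0,1]$ (which avoids both the weight and the random shift in the cited statement), realize $V_{[0,1]}(B^{H})$ as $\sum_{n}\mu_{n}\xi_{n}^{2}$ for the compression $QR_{H}Q$, transfer Bronski's Karhunen--Lo\`eve eigenvalue asymptotics $\lambda_{n}\sim\phi n^{-(2H+1)}$ to the $\mu_{n}$ by Cauchy interlacing, and then run the Laplace-transform computation plus de Bruijn's exponential Tauberian theorem. What this buys is transparency: one sees exactly where each factor of $c_{H}$ comes from (the $\sin(\pi/(2H+1))$ from the integral $\int_{0}^{\infty}\log(1+t^{-\alpha})\,dt$, the $\Gamma(2H+1)\sin(\pi H)$ from the spectrum), and the interlacing step is a clean rigorous substitute for the paper's remark that the shift $G$ does not affect the spectral asymptotics. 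The cost is that you still lean on two nontrivial external inputs --- Bronski's eigenvalue asymptotics (which for $H\neq\tfrac12$ are not elementary) and the Tauberian lower bound --- so the argument is not shorter than the paper's single citation, only more self-contained in structure. Both proofs are valid.
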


Actually, we will only need 
\begin{equation}
\limsup_{\varepsilon \rightarrow 0}\varepsilon ^{\frac{1}{H}}\log
P(V_{[a,b]}(B^{H})\leq \varepsilon ^{2})<\infty .  \label{PathVarU}
\end{equation}%
In the case of $H=\frac{1}{2}$, this estimate of the path variance for
Brownian motion was introduced by Malliavin \cite[Lemma 3.3.2]{Mal78b},
using the following Payley--Wiener expansion of Brownian motion: 
\begin{equation}
B_{t}=tG+\sqrt{2}\sum_{k=1}^{\infty }\frac{1}{2\pi k}(X_{k}\cos 2\pi
kt+Y_{k}\sin 2\pi kt),\text{ a.s. for all }t\in \lbrack 0,1],  \label{PWe-Bm}
\end{equation}%
where $G$, $X_{k},Y_{k}$, $k\in \mathbb{N}$, are i.i.d. standard Gaussian
random variables. Then the estimate (\ref{PathVarU}) follows by observing
that $V_{[0,1]}(B)=\frac{1}{2\pi ^{2}}\sum_{k=1}^{\infty }\frac{1}{2\pi k}%
(X_{k}^{2}+Y_{k}^{2})$, a sum of $\chi ^{2}(1)$ random variables. The above
expansion of Brownian motion can be obtained by integrating an expansion of
white noise on the orthonormal basis $\left\{ 1,\sqrt{2}\cos 2\pi kt,\sqrt{2}%
\sin 2\pi kt\right\} $ of $L^{2}[0,1]$. Payley--Wiener expansion of fBm has
been established recently by Dzhaparidze and van Zanten \cite{DZ05}: 
\begin{equation}
B_{t}^{H}=tX+\sum_{k=1}^{\infty }\frac{1}{\omega _{k}}\left[ X_{k}(\cos
2\omega _{k}t-1)+Y_{k}\sin 2\omega _{k}t\right] ,  \label{PWe-fBm}
\end{equation}%
where $0<\omega _{1}<\omega _{2}<\dots $ are the real zeros of $J_{-H}$ (the
Bessel function of the first kind of order $-H$), and $X$, $X_{k},Y_{k}$, $%
k\in \mathbb{N}$, are independent centered Gaussian random variables with
variance 
\begin{equation*}
EX^{2}=\sigma _{H}^{2},EX_{k}^{2}=EY_{k}^{2}=\sigma _{k}^{2},
\end{equation*}%
with $\sigma _{H}^{2}=\frac{\Gamma (\frac{3}{2}-H)}{2H\Gamma (H+\frac{1}{2}%
)\Gamma (3-2H)}$ and $\sigma _{k}^{2}=\sigma _{H}^{2}(2-2H)\Gamma
^{2}(1-H)\left( \frac{\omega _{k}}{2}\right) ^{2H}J_{-H}(\omega _{k})$.
Because the path variance $V_{[0,1]}(B^{H})$ is difficult to evaluate in the
case $H\neq \frac{1}{2}$, the techniques of \cite[Lemma 3.3.2]{Mal78b} to
prove (\ref{PathVarU}) no longer work.

Fortunately, recent developments in small deviation theory allow us to
derive a simple proof of  (\ref{PathVar0}).

\begin{proof}[Proof of Lemma \protect\ref{PathVar}]
In \cite[Theorem 3.1 and Remark 3.1]{NaNi05} Nazarov and Nikitin proved that
for any square integrable random variable $G$ and any nonnegative function $%
\psi \in L^{1}[0,1]$, 
\begin{equation}
\lim_{\varepsilon \rightarrow 0}\varepsilon ^{\frac{1}{H}}\log
P(\int_{0}^{1}(B_{t}^{H}-G)^{2}\psi (t)dt\leq \varepsilon ^{2})=-c_{H}\left(
\int_{0}^{1}\psi (t)^{\frac{1}{2H+1}}dt\right) ^{\frac{2H+1}{2H}}.
\label{NN-SBP}
\end{equation}%
Notice that by the self-similarity property of fBm, 
\begin{equation*}
V_{[a,b]}(B^{H})=\int_{a}^{b}\left( B_{t}^{H}-\overline{B^{H}}\right) ^{2}%
\frac{dt}{b-a}=b\int_{a/b}^{1}\left( B_{bu}^{H}-\overline{B^{H}}\right) ^{2}%
\frac{du}{b-a}
\end{equation*}%
has the same distribution as $b^{2H+1}\int_{a/b}^{1}\left( B_{u}^{H}-b^{-H}%
\overline{B^{H}}\right) ^{2}\frac{du}{b-a}$. Then, Lemma \ref{PathVar}
follows from (\ref{NN-SBP}) by taking $G=b^{-H}\overline{B^{H}}$ and $\psi
(t)=\mathbf{1}_{[a/b,1]}(t)$.
\end{proof}

We comment that Bronski \cite{Bro03} proved (\ref{NN-SBP}) for the case $G=0$
and $\psi \equiv 1$ by estimating the asymptotics of the Karhunen--Loeve
eigenvalues of fBm. Actually, the assumption $G=0$ is not necessary, because
a random variable $G$ here doesn't contribute to the asymptotics of the
Karhunen--Loeve eigenvalues.

\section{Proof of the main theorem}

In this section we prove (\ref{Non-deg-Th}) by estimating $P(\left\Vert
DF\right\Vert _{\mathfrak{H}}\leq \varepsilon )$ for $\varepsilon $ small.

For simplicity, we denote%
\begin{eqnarray*}
I &=&\left\{ \left( t,t^{\prime }\right) \in \lbrack 0,1]^{2},t^{\prime
}\leq t\right\} , \\
\vec{t} &=&\left( t,t^{\prime }\right) ,~d\vec{t}=dtdt^{\prime }\text{.}
\end{eqnarray*}

\begin{lemma}
\label{Lemma1}Let $Q(\vec{t},\vec{s})=\langle \mathbf{1}_{[t^{\prime },t]},%
\mathbf{1}_{[s^{\prime },s]}\rangle _{\mathfrak{H}}$. Then the operator $Q$
on $L^{2}(I)$ defined by 
\begin{equation*}
Qf(\vec{t})=\int_{I}Q(\vec{t},\vec{s})f(\vec{s})d\vec{s}\text{, }f\in
L^{2}(I)
\end{equation*}%
is symmetric positive and compact.
\end{lemma}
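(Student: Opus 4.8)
The plan is to verify the three claimed properties of $Q$ separately, all of which follow from the fact that $Q(\vec t,\vec s)=\langle \mathbf 1_{[t',t]},\mathbf 1_{[s',s]}\rangle_{\mathfrak H}$ is the Gram kernel of a family of elements of the Hilbert space $\mathfrak H$. First I would note that $Q$ is bounded on $L^2(I)$: using $\langle \mathbf 1_{[t',t]},\mathbf 1_{[s',s]}\rangle_{\mathfrak H}\le \|\mathbf 1_{[t',t]}\|_{\mathfrak H}\|\mathbf 1_{[s',s]}\|_{\mathfrak H}=|t-t'|^H|s-s'|^H\le 1$ (by Cauchy--Schwarz in $\mathfrak H$ and the definition of the scalar product), the kernel is bounded, hence in $L^2(I\times I)$ since $I$ has finite measure, so $Q$ is Hilbert--Schmidt and in particular compact.

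Symmetry is immediate from the symmetry of the inner product: $Q(\vec t,\vec s)=\langle \mathbf 1_{[t',t]},\mathbf 1_{[s',s]}\rangle_{\mathfrak H}=\langle \mathbf 1_{[s',s]},\mathbf 1_{[t',t]}\rangle_{\mathfrak H}=Q(\vec s,\vec t)$, so $Q$ is self-adjoint on $L^2(I)$. For positivity, I would take any $f\in L^2(I)$ and compute
\begin{equation*}
\langle Qf,f\rangle_{L^2(I)}=\int_I\int_I\langle \mathbf 1_{[t',t]},\mathbf 1_{[s',s]}\rangle_{\mathfrak H}f(\vec t)f(\vec s)\,d\vec t\,d\vec s
=\Bigl\langle \int_I f(\vec t)\mathbf 1_{[t',t]}\,d\vec t,\ \int_I f(\vec s)\mathbf 1_{[s',s]}\,d\vec s\Bigr\rangle_{\mathfrak H}=\Bigl\|\int_I f(\vec t)\mathbf 1_{[t',t]}\,d\vec t\Bigr\|_{\mathfrak H}^2\ge 0,
\end{equation*}
where the interchange of the $\mathfrak H$-inner product with the integrals is justified because $\vec t\mapsto \mathbf 1_{[t',t]}$ is a bounded (hence Bochner-integrable against $f\in L^2(I)\subset L^1(I)$) $\mathfrak H$-valued map, the bound being $\|\mathbf 1_{[t',t]}\|_{\mathfrak H}=|t-t'|^H\le 1$.

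The main technical point to be careful about is the passage between the scalar kernel $Q(\vec t,\vec s)$ and the $\mathfrak H$-valued Bochner integral $\int_I f(\vec t)\mathbf 1_{[t',t]}\,d\vec t$; one must check that $\vec t\mapsto\mathbf 1_{[t',t]}$ is weakly (hence strongly, by separability of $\mathfrak H$) measurable and bounded in $\mathfrak H$-norm so that the Bochner integral exists, and then Fubini together with the continuity and bilinearity of the inner product gives the identity above. Once this is in place, symmetry, positivity and compactness are all established and the lemma follows.
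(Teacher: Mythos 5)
Your proof is correct and follows essentially the same route as the paper's: compactness from the kernel being in $L^{2}(I\times I)$, symmetry from that of the $\mathfrak{H}$-inner product, and positivity by recognizing $\langle Qf,f\rangle_{L^{2}(I)}$ as $\bigl\Vert \int_{I}\mathbf{1}_{[t^{\prime},t]}f(\vec{t})\,d\vec{t}\bigr\Vert_{\mathfrak{H}}^{2}$. The only difference is that you spell out the bound $\Vert\mathbf{1}_{[t^{\prime},t]}\Vert_{\mathfrak{H}}=|t-t^{\prime}|^{H}\le 1$ and the Bochner-integrability justification, which the paper leaves implicit.
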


\begin{proof}
Compactness follows from $Q(\vec{t},\vec{s})\in L^{2}(I\times I)$. The
function $Q(\vec{t},\vec{s})$ is symmetric, so is the operator $Q$. Finally, 
$Q$ is positive because for any $f\in L^{2}(I)$, 
\begin{equation*}
\left\langle Qf,f\right\rangle _{L^{2}(I)}=\int_{I}\int_{I}Q(\vec{t},\vec{s}%
)f(\vec{s})d\vec{s}f(\vec{t})d\vec{t}=\left\Vert \int_{I}\mathbf{1}%
_{[t^{\prime },t]}f(\vec{t})d\vec{t},\right\Vert _{\mathfrak{H}}^{2}.
\end{equation*}
\end{proof}

Then, it follows that $Q$ has a sequence of decreasing eigenvalues $\left\{
\lambda _{n}\right\} _{n\in \mathbb{N}}$, i.e. $\lambda _{1}\geq \dots \geq
\lambda _{n}>0,$ and $\lambda _{n}\rightarrow 0$. The corresponding
normalized eigen-functions $\left\{ \varphi _{n}\right\} _{n\in \mathbb{N}}$
form an orthonormal basis of $L^{2}(I)$. Each of them is continuous because $%
\phi _{n}(\vec{t})=\lambda _{n}^{-1}\int_{I}Q(\vec{t},\vec{s})\phi _{n}(\vec{%
s})d\vec{s}$ and $Q(\vec{t},\vec{s})$ is continuous. We can write 
\begin{equation}
Q(\vec{t},\vec{s})=\sum_{n\geq 1}\lambda _{n}\varphi _{n}(\vec{t})\varphi
_{n}\left( \vec{s}\right) .  \label{Qexpan}
\end{equation}

From the definition of Malliavin derivative we have 
\begin{equation*}
D_{r}F=4p\int_{I}\frac{\left\vert B_{t}^{H}-B_{t^{\prime }}^{H}\right\vert
^{2p-1}}{\left\vert t-t^{\prime }\right\vert ^{q}}\mathrm{sign}%
(B_{t}^{H}-B_{t^{\prime }}^{H})\mathbf{1}_{[t^{\prime },t]}(r)dtdt^{\prime }.
\end{equation*}%
Then%
\begin{eqnarray}
\left\Vert DF\right\Vert _{\mathfrak{H}}^{2} &=&16p^{2}\left\Vert \int_{I}%
\mathbf{1}_{[t^{\prime },t]}(\cdot )\frac{\left\vert B_{t}^{H}-B_{t^{\prime
}}^{H}\right\vert ^{2p-1}}{\left\vert t-t^{\prime }\right\vert ^{q}}\mathrm{%
sign}(B_{t}^{H}-B_{t^{\prime }}^{H})dtdt^{\prime }\right\Vert _{\mathfrak{H}%
}^{2}  \label{DF-Hnorm} \\
&=&16p^{2}\int_{I\times I}\langle \mathbf{1}_{[t^{\prime },t]},\mathbf{1}%
_{[s^{\prime },s]}\rangle _{\mathfrak{H}}\frac{\left\vert
B_{t}^{H}-B_{t^{\prime }}^{H}\right\vert ^{2p-1}}{\left\vert t-t^{\prime
}\right\vert ^{q}}\mathrm{sign}(B_{t}^{H}-B_{t^{\prime }}^{H})  \notag \\
&&~~\times \frac{\left\vert B_{s}^{H}-B_{s^{\prime }}^{H}\right\vert ^{2p-1}%
}{\left\vert s-s^{\prime }\right\vert ^{q}}\mathrm{sign}(B_{s}^{H}-B_{s^{%
\prime }}^{H})d\vec{t}d\vec{s}.  \notag
\end{eqnarray}%
Using (\ref{Qexpan}) to evaluate the inner product in (\ref{DF-Hnorm})
yields 
\begin{equation}
\left\Vert DF\right\Vert _{\mathfrak{H}}^{2}=16p^{2}\sum_{i\geq 1}\lambda
_{i}V_{i}^{2},  \label{DF-norm}
\end{equation}%
where we denote 
\begin{equation}
V_{i}=\int_{I}\varphi _{i}(t,t^{\prime })\frac{\left\vert
B_{t}^{H}-B_{t^{\prime }}^{H}\right\vert ^{2p-1}}{\left\vert t-t^{\prime
}\right\vert ^{q}}\mathrm{sign}(B_{t}^{H}-B_{t^{\prime }}^{H})dtdt^{\prime }.
\label{vi}
\end{equation}

For each $\beta =(\beta _{1},\dots ,\beta _{n})\in S^{n-1}$ (the unit sphere
in $\mathbb{R}^{n}$), let $\Psi _{\beta }(\vec{t})=\sum_{i=1}^{n}\beta
_{i}\varphi _{i}(\vec{t})$. We denote 
\begin{equation}
G_{\beta }=\int_{I}\Psi _{\beta }^{2}(\vec{t})\frac{\left\vert
B_{t}^{H}-B_{t^{\prime }}^{H}\right\vert ^{2p-2}}{\left\vert t-t^{\prime
}\right\vert ^{q}}d\vec{t}.  \label{GbetaDef}
\end{equation}

\begin{lemma}
\label{Gbeta}There exists a constant $C_{p,H}>0$ such that for all $\beta
\in S^{n-1}$ and $\varepsilon >0$, 
\begin{equation}
P\left( G_{\beta }\leq \varepsilon \right) \leq \exp \left\{
-C_{p,H}\varepsilon ^{-\frac{1}{2H(p-1)}}\right\} .  \label{Gbeta0}
\end{equation}
\end{lemma}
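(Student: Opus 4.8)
The plan is to reduce the bound on $G_\beta$ to the path-variance estimate of Lemma \ref{PathVar}, exploiting the fact that $\Psi_\beta$ is a unit vector in $L^2(I)$ together with the continuity of the eigenfunctions $\varphi_i$. First I would observe that, since $B^H$ has $\alpha$-H\"older continuous paths for every $\alpha<H$, on the event that the modulus of continuity is controlled one has $|B_t^H-B_{t'}^H|^{2p-2}\le M^{2p-2}|t-t'|^{(2p-2)\alpha}$; but a cleaner route is to fix a small fixed subinterval $[a,b]\subset[0,1]$ and a small fixed window, and to bound $G_\beta$ from below by integrating only over pairs $(t,t')$ with $t'$ in a fixed reference point (or a small interval) and $t$ ranging over $[a,b]$. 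Concretely, I would choose the integration region so that the weight $|t-t'|^{-q}$ is bounded below by a positive constant, and so that the square $\Psi_\beta^2$ integrates (over the relevant slice) to something bounded below uniformly in $\beta\in S^{n-1}$ — this is where one uses that the $\varphi_i$ are continuous and that the finite-dimensional sphere is compact, so $\inf_{\beta}\int_{J}\Psi_\beta^2$ over a suitable slab $J$ is strictly positive. The point of making everything uniform in $\beta$ is that the final constant $C_{p,H}$ must not depend on $n$.

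Next I would relate $G_\beta$ to a path variance. The quantity $\int |B_t^H-B_{t'}^H|^{2p-2}\,(\text{weight})$ is, up to constants, comparable to (a power of) $V_{[a,b]}(B^H)$ when $2p-2\ge 2$, i.e. $p\ge 2$, because $|B_t^H-B_{t'}^H|^{2}$ averaged against a probability weight on $[a,b]\times[a,b]$ is exactly $2V_{[a,b]}(B^H)$, and by Jensen's inequality $\int |B_t^H-B_{t'}^H|^{2p-2}\,d\mu \ge \big(\int|B_t^H-B_{t'}^H|^{2}\,d\mu\big)^{p-1}$ for a probability measure $\mu$. Thus $G_\beta \ge c\,V_{[a,b]}(B^H)^{p-1}$ for a constant $c>0$ independent of $\beta$ (after fixing the slab and rescaling the weight to a probability measure and absorbing the lower bound on $|t-t'|^{-q}$). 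Hence $\{G_\beta\le\varepsilon\}\subset\{V_{[a,b]}(B^H)\le (\varepsilon/c)^{1/(p-1)}\}$, and applying \eqref{PathVarU} with $\varepsilon^2$ there replaced by $(\varepsilon/c)^{1/(p-1)}$ gives, for $\varepsilon$ small,
\begin{equation*}
P(G_\beta\le\varepsilon)\le \exp\Big\{-C\,\big((\varepsilon/c)^{1/(p-1)}\big)^{-\frac{1}{2H}}\Big\}
=\exp\big\{-C_{p,H}\,\varepsilon^{-\frac{1}{2H(p-1)}}\big\},
\end{equation*}
which is exactly \eqref{Gbeta0}. For $\varepsilon$ bounded away from $0$ the bound is trivial after adjusting $C_{p,H}$.

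The main obstacle I anticipate is the uniformity in $\beta\in S^{n-1}$ together with independence of $n$: one must produce a single slab $[a,b]$ and a single positive lower bound for $\int_{\text{slab}}\Psi_\beta^2$ that works for all $\beta$ and all $n$. A continuous partition-of-unity / compactness argument over $S^{n-1}$ for each fixed $n$ gives a positive bound, but a priori it could degenerate as $n\to\infty$ (higher eigenfunctions oscillate more). The fix is to avoid demanding a lower bound on $\int_{\text{slab}}\Psi_\beta^2$ for a \emph{fixed} slab; instead, for each $\beta$ choose the slab adapted to $\beta$ (since $\|\Psi_\beta\|_{L^2(I)}=1$, there is always \emph{some} subinterval of fixed length where the local $L^2$-mass of $\Psi_\beta$ is at least a universal constant), and then use the translation/scaling structure of \eqref{PathVar0}: the constant in the path-variance estimate over $[a,b]$ is $(b-a)c_H$, hence uniformly bounded below over all slabs of a fixed length. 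This makes the argument genuinely uniform in both $\beta$ and $n$. A secondary technical point is the range of $p$: the Jensen step needs $2p-2\ge 2$; for $1<p<2$ one must instead keep $|B_t^H-B_{t'}^H|^{2p-2}$ directly and compare $\int|B_t^H-B_{t'}^H|^{2p-2}d\mu$ with a (possibly different) small-deviation functional of $B^H$, or note that $G_\beta$ restricted to the slab dominates $\big(\sup_{t,t'\in\text{slab}}|B_t^H-B_{t'}^H|\big)^{2p-2}$ times a constant only after a reverse estimate, so one argues via $\int|B_t^H-B_{t'}^H|^{2p-2}\ge (\text{const})\,V_{[a,b]}(B^H)^{p-1}$ using Jensen with the convex function $x\mapsto x^{p-1}$, valid precisely when $p-1\ge 1$; for $p<2$ a separate small-ball input (e.g.\ of the type \eqref{L-S99}) would be invoked instead, which still yields a stretched-exponential bound of the stated form.
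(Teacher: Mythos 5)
Your plan follows the same route as the paper's proof: localize to a small square $A\subset I$ where $\Psi_{\beta}^{2}$ is bounded below (using continuity of the $\varphi_{i}$ and compactness of $S^{n-1}$), discard the weight via $|t-t'|^{-q}\geq 1$, apply Jensen's inequality with $x\mapsto x^{p-1}$ to pass from $|B^{H}_{t}-B^{H}_{t'}|^{2p-2}$ to $\bigl(\int_{A}(B^{H}_{t}-B^{H}_{t'})^{2}\bigr)^{p-1}$, dominate that by a path variance, and invoke Lemma \ref{PathVar}. Your explicit flag that the Jensen step needs $p\geq 2$ is a point the paper passes over in silence, and your observation that the comparison with $V_{[a,b]}$ must be done carefully is apt: since $A$ is an \emph{off-diagonal} rectangle, the averaged square is not exactly $2V_{[a,b]}$, but the inequality $\int_{a}^{b}(f-c)^{2}\geq\int_{a}^{b}(f-\overline{f})^{2}$ applied for each fixed $t'$ still gives the needed one-sided bound, which is exactly what the paper does in \eqref{Gbeta3}.

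The one place where you go astray is the ``main obstacle'' you identify and your proposed fix. Uniformity of $C_{p,H}$ in $n$ is not required: Lemma \ref{Gbeta} is only ever invoked for a fixed $n$ inside Lemma \ref{V}, and the final small-ball estimate \eqref{DF-SBP} explicitly allows its constant to depend on $n$; indeed the paper's own constant $C_{p,H}=c_{H}\min_{i}\delta_{i}(\rho_{i}\delta_{i})^{1/(2H(p-1))}$ depends on the finite cover of $S^{n-1}$ and hence on $n$. More importantly, the adapted-slab fix you propose would not work as stated: knowing only that $\int_{J}\Psi_{\beta}^{2}\geq c$ over some slab $J$ does not let you bound $G_{\beta}=\int\Psi_{\beta}^{2}\,w$ from below by $c$ times $\int_{J}w$ (or any Lebesgue integral of $w$), since the mass of $\Psi_{\beta}^{2}$ could concentrate on a thin subset of $J$; the reduction to the path variance genuinely requires the \emph{pointwise} bound $\Psi_{\beta}^{2}\geq\rho>0$ on a full square, and that is precisely what the continuity-plus-compactness argument (your route (A), and the paper's) supplies. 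Since your primary argument for fixed $n$ is correct and the flawed fix addresses a non-problem, the proposal stands, but you should drop the adapted-slab variant or replace the $L^{2}$-mass lower bound by a pointwise one.
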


\begin{proof}
Fix an arbitrary $\beta \in S^{n-1}$. Then $\Psi _{\beta }\not\equiv 0$
since $\varphi _{i},\dots ,\varphi _{n}$ are linearly independent. Since $%
\Psi _{\beta }$ is continuous on $I$, there exists $\vec{t}_{\beta
}=(t_{\beta }^{\prime },t_{\beta })\in I$, $\delta _{\beta }$ and $\rho
_{\beta }$ such that for all $\vec{t}\in A_{\beta }:=[t_{\beta }^{\prime
}-\delta _{\beta },t_{\beta }^{\prime }+\delta _{\beta }]\times \lbrack
t_{\beta }-\delta _{\beta },t_{\beta }+\delta _{\beta }]\subset I,$ 
\begin{equation*}
\Psi _{\beta }^{2}(\vec{t})\geq \rho _{\beta }>0.
\end{equation*}%
Let $C=2\max_{i\in \left\{ 1,\dots ,n\right\} }\sup_{\vec{t}\in I}\left\vert
\varphi (\vec{t})\right\vert <\infty $. Then for any $\beta ^{\prime }\in
S^{n-1}$,  
\begin{equation*}
\left\vert \Psi _{\beta }^{2}(\vec{t})-\Psi _{\beta ^{\prime }}^{2}(\vec{t}%
)\right\vert \leq C\left\Vert \beta -\beta ^{\prime }\right\Vert .
\end{equation*}%
Then for any $\beta ^{\prime }\in S^{n-1}$ satisfying $\left\Vert \beta
^{\prime }-\beta \right\Vert \leq \rho _{\beta }/2C$, one has 
\begin{equation}
\Psi _{\beta ^{\prime }}^{2}(\vec{t})\geq \Psi _{\beta }^{2}(\vec{t}%
)-\left\vert \Psi _{\beta }^{2}(\vec{t})-\Psi _{\beta ^{\prime }}^{2}(\vec{t}%
)\right\vert \geq \rho _{\beta }/2,  \label{Psi}
\end{equation}%
for any $\vec{t}\in A_{\beta }$. 

Note that $S^{n-1}$ has a finite cover $S^{n-1}\subset \cup
_{i=1}^{m}B(\beta ^{i},\frac{\rho _{\beta ^{i}}}{2C})$. Denote $\rho
_{i}=\rho _{\beta ^{i}}$, $\delta _{i}=\delta _{\beta ^{i}}$, $\vec{t}_{i}=%
\vec{t}_{\beta ^{i}}$ and $A_{i}=A_{\beta ^{i}}$. Then it follows from (\ref%
{Psi}) that for any $\beta \in S^{n-1}$, there exists a $\beta ^{i}\in
S^{n-1}$ such that 
\begin{equation*}
\Psi _{\beta }^{2}(\vec{t})\geq \rho _{i}/2,\text{ for all }\vec{t}\in A_{i}.
\end{equation*}%
Then noticing that $\left\vert t-t^{\prime }\right\vert \leq 1$ and applying
Jensen's inequality we obtain 
\begin{eqnarray}
G_{\beta } &\geq &\frac{\rho _{i}}{2}\int_{A_{i}}\frac{\left\vert
B_{t}^{H}-B_{t^{\prime }}^{H}\right\vert ^{2p-2}}{\left\vert t-t^{\prime
}\right\vert ^{q}}d\vec{t}\geq \frac{\rho _{i}}{2}\int_{A_{i}}\left\vert
B_{t}^{H}-B_{t^{\prime }}^{H}\right\vert ^{2p-2}d\vec{t}  \notag \\
&\geq &\frac{\rho _{i}}{2(2\delta _{i})^{p-2}}\left( \int_{A_{i}}\left(
B_{t}^{H}-B_{t^{\prime }}^{H}\right) ^{2}d\vec{t}\right) ^{p-1}.
\label{Gbeta2}
\end{eqnarray}%
Note that for $f\in C[a,b]$ with average $\overline{f}=\frac{1}{b-a}%
\int_{a}^{b}f(\xi )d\xi $, we have 
\begin{equation*}
\frac{1}{b-a}\int_{a}^{b}(f(\xi )-\overline{f})^{2}d\xi \leq \frac{1}{b-a}%
\int_{a}^{b}(f(\xi )-c)^{2}d\xi 
\end{equation*}%
for any number $c$. Then 
\begin{equation}
\int_{A_{i}}\left( B_{t}^{H}-B_{t^{\prime }}^{H}\right) ^{2}d\vec{t}%
=\int_{t_{i}-\delta _{i}}^{t_{i}+\delta _{i}}\int_{t_{i}^{\prime }-\delta
_{i}}^{t_{i}^{\prime }+\delta _{i}}\left( B_{t}^{H}-B_{t^{\prime
}}^{H}\right) ^{2}dtdt^{\prime }\geq 2\delta _{i}\int_{t_{i}-\delta
_{i}}^{t_{i}+\delta _{i}}\left( B_{t}^{H}-\overline{B^{H}}\right) ^{2}dt
\label{Gbeta3}
\end{equation}%
where $\overline{B^{H}}=\int_{t_{i}-\delta _{i}}^{t_{i}+\delta
_{i}}B_{t}^{H}dt$. Combining (\ref{Gbeta2}) and (\ref{Gbeta3}) and applying
Lemma \ref{PathVar} we obtain%
\begin{eqnarray*}
P\left( G_{\beta }\leq \varepsilon \right)  &\leq &P(\int_{t_{i}-\delta
_{i}}^{t_{i}+\delta _{i}}\left( B_{t}^{H}-\overline{B^{H}}\right) ^{2}dt\leq
\left( \rho _{i}\delta _{i}\right) ^{-\frac{1}{p-1}}\varepsilon ^{\frac{1}{%
p-1}}) \\
&\leq &\exp \{-c_{H}\delta _{i}\left( \rho _{i}\delta _{i}\right) ^{\frac{1}{%
2H(p-1)}}\varepsilon ^{-\frac{1}{2H(p-1)}}\}.
\end{eqnarray*}
Then one obtains (\ref{Gbeta0}) by choosing $C_{p,H}=c_{H}\min_{1\leq i\leq
m}\delta _{i}\left( \rho _{i}\delta _{i}\right) ^{\frac{1}{2H(p-1)}}$.
\end{proof}

\textbf{Remark:} At the first glance, it seems that (\ref{Gbeta0}) can be
obtained by applying (\ref{L-S99}) to the first inequality in (\ref{Gbeta2}%
). But (\ref{L-S99}) can only be applied to square interval on the diagonal
like $[a,b]\times \lbrack a,b]$ (after applying the scaling and
self-similarity property of fBm), and here the interval $A_{i}=[t_{i}^{%
\prime }-\delta _{i},t_{i}^{\prime }+\delta _{i}]\times \lbrack t_{i}-\delta
_{i},t_{i}+\delta _{i}]$ is off diagonal.

\begin{lemma}
\label{V}For any integer $n$, the random vector $\mathbf{V}=\left(
V_{1},\dots ,V_{n}\right) $ defined in $(\ref{vi})$ is non-degenerate.
\end{lemma}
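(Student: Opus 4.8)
The plan is to follow the same scheme as for $F$. It suffices to prove that for every integer $k\ge 1$ there is a constant $C_k$ with $P(\det\gamma_{\mathbf V}\le\varepsilon)\le C_k\varepsilon^k$ for all small $\varepsilon$; this forces $P(\det\gamma_{\mathbf V}=0)=0$ and $(\det\gamma_{\mathbf V})^{-1}\in L^k(\Omega)$ for all $k$, and together with $V_i\in\mathbb D^{1,2}$ (checked exactly as for $F$, using $(2p-2)H>q-1$) it gives non-degeneracy. Since $\gamma_{\mathbf V}$ is symmetric and nonnegative definite, $\det\gamma_{\mathbf V}\ge(\inf_{\beta\in S^{n-1}}\beta^{\top}\gamma_{\mathbf V}\beta)^{n}$, and for $\beta\in S^{n-1}$ one has $\beta^{\top}\gamma_{\mathbf V}\beta=\|\sum_{i=1}^{n}\beta_iDV_i\|_{\mathfrak H}^{2}=\|D\Sigma_\beta\|_{\mathfrak H}^{2}$, where $\Sigma_\beta=\sum_{i=1}^{n}\beta_iV_i=\int_I\Psi_\beta(\vec t)\,|B_t^H-B_{t'}^H|^{2p-1}|t-t'|^{-q}\,\mathrm{sign}(B_t^H-B_{t'}^H)\,d\vec t$.

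Next I would bound $\|D\Sigma_\beta\|_{\mathfrak H}^{2}$ below in terms of $G_\beta$. Differentiating gives $D\Sigma_\beta=(2p-1)\int_I\Psi_\beta(\vec t)\,|B_t^H-B_{t'}^H|^{2p-2}|t-t'|^{-q}\,\mathbf 1_{[t',t]}(\cdot)\,d\vec t$, so expanding the $\mathfrak H$-inner product with $(\ref{Qexpan})$ exactly as in $(\ref{DF-Hnorm})$--$(\ref{DF-norm})$ yields $\|D\Sigma_\beta\|_{\mathfrak H}^{2}=(2p-1)^{2}\sum_{m\ge 1}\lambda_mW_m^{2}$, where $W_m=\int_I\varphi_m(\vec t)\Psi_\beta(\vec t)\,|B_t^H-B_{t'}^H|^{2p-2}|t-t'|^{-q}\,d\vec t$. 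The key observation is that $\Psi_\beta=\sum_{m=1}^{n}\beta_m\varphi_m$ lies in the span of the first $n$ eigenfunctions, so, $\{\varphi_m\}$ being orthonormal in $L^2(I)$, $G_\beta=\int_I\Psi_\beta^2(\vec t)\,|B_t^H-B_{t'}^H|^{2p-2}|t-t'|^{-q}\,d\vec t=\sum_{m=1}^{n}\beta_mW_m$. Cauchy--Schwarz and $\lambda_m\ge\lambda_n>0$ for $1\le m\le n$ then give $G_\beta^{2}\le\sum_{m=1}^{n}W_m^{2}\le\lambda_n^{-1}\sum_{m=1}^{n}\lambda_mW_m^{2}\le\big((2p-1)^{2}\lambda_n\big)^{-1}\|D\Sigma_\beta\|_{\mathfrak H}^{2}$. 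Hence $\det\gamma_{\mathbf V}\ge\big((2p-1)^{2}\lambda_n\big)^{n}\big(\inf_{\beta\in S^{n-1}}G_\beta\big)^{2n}$, and the problem reduces to a uniform-in-$\beta$ small ball estimate: $P(\inf_{\beta\in S^{n-1}}G_\beta\le\delta)\le C_j\delta^{j}$ for every $j$.

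To obtain this uniform bound I would combine Lemma $\ref{Gbeta}$ with a net on the sphere. Put $F_0=\int_I|B_t^H-B_{t'}^H|^{2p-2}|t-t'|^{-q}\,d\vec t$; by the integrability condition $(2p-2)H>q-1$ and the Gaussian moment estimates for increments of $B^H$, $F_0$ has finite moments of all orders, so $P(F_0>M)\le E[F_0^{j}]M^{-j}$ for every $j$. With $C=2\max_{1\le i\le n}\sup_{\vec t\in I}|\varphi_i(\vec t)|$ as in the proof of Lemma $\ref{Gbeta}$ we have $|\Psi_\beta^{2}(\vec t)-\Psi_{\beta'}^{2}(\vec t)|\le C\|\beta-\beta'\|$, hence $|G_\beta-G_{\beta'}|\le C\|\beta-\beta'\|\,F_0$. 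Cover $S^{n-1}$ by at most $N\le c_n\eta^{-(n-1)}$ balls of radius $\eta=\delta^{2}$ centered at $\beta^{1},\dots,\beta^{N}$; on $\{F_0\le\delta^{-1}\}$ every $\beta\in S^{n-1}$ satisfies $|G_\beta-G_{\beta^j}|\le C\delta$ for some $j$, so
\[
P\Big(\inf_{\beta\in S^{n-1}}G_\beta\le\delta\Big)\le P(F_0>\delta^{-1})+\sum_{j=1}^{N}P\big(G_{\beta^j}\le(1+C)\delta\big)\le E[F_0^{j}]\delta^{j}+c_n\delta^{-2(n-1)}\exp\big\{-C_{p,H}\big((1+C)\delta\big)^{-\frac{1}{2H(p-1)}}\big\}.
\]
Since the second term decays faster than any power of $\delta$, this gives $P(\inf_{\beta\in S^{n-1}}G_\beta\le\delta)\le C_j\delta^{j}$ for all $j$.

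Finally I would assemble the pieces: $P(\det\gamma_{\mathbf V}\le\varepsilon)\le P\big(\inf_{\beta\in S^{n-1}}G_\beta\le\big((2p-1)^{2}\lambda_n\big)^{-1/2}\varepsilon^{1/(2n)}\big)\le C_k\varepsilon^{k}$ for every $k$ and all small $\varepsilon$, which is the required estimate, so $\mathbf V$ is non-degenerate. I expect the main obstacle to be the third paragraph: upgrading the pointwise bound of Lemma $\ref{Gbeta}$ to a bound for the infimum over $S^{n-1}$. This requires the random Lipschitz constant $F_0$ of $\beta\mapsto G_\beta$ to have moments of all orders, and a balance between the net cardinality $\eta^{-(n-1)}$ and the stretched-exponential probability; because the latter beats any polynomial in $\delta$, any choice of $\eta$ polynomial in $\delta$ (we took $\eta=\delta^{2}$) works. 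The other point needing care is the identity $\|D\Sigma_\beta\|_{\mathfrak H}^{2}=(2p-1)^{2}\sum_m\lambda_mW_m^{2}$ together with the remark that $\Psi_\beta$ belongs to $\mathrm{span}\{\varphi_1,\dots,\varphi_n\}$, which is exactly what lets us drop the (possibly vanishing) tail eigenvalues and retain only $\lambda_n>0$.
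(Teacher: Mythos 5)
Your proposal is correct, and its core reduction is exactly the paper's: bound $\det\gamma_{\mathbf V}$ below by the $n$-th power of the smallest eigenvalue $\inf_{\beta\in S^{n-1}}\|D(\sum_i\beta_iV_i)\|_{\mathfrak H}^2$, expand that norm via the eigenfunction decomposition of $Q$ to get $(2p-1)^2\sum_m\lambda_m q_m^2$, and use $G_\beta=\sum_{m\le n}\beta_m q_m$ together with $\lambda_m\ge\lambda_n$ to obtain $\|D(\sum_i\beta_iV_i)\|_{\mathfrak H}^2\ge c\,\lambda_n G_\beta^2$ (your Cauchy--Schwarz version even saves the factor $1/n$ the paper loses). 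Where you genuinely diverge is in making the lower bound on $G_\beta$ uniform over the sphere. The paper chains the quadratic form $\beta\mapsto(M\beta,\beta)$ itself, controls its Lipschitz constant on the event $W_n=\{\|DV_i\|_{\mathfrak H}^2\le\exp\varepsilon^{-\alpha}\}$, and therefore needs an exponentially large net ($m\le C\exp(2n\varepsilon^{-\alpha})$), which the stretched-exponential bound of Lemma \ref{Gbeta} still absorbs; the payoff is the strong estimate $P(\gamma_1<\tfrac{A_n}{2}\varepsilon^2)\le C\exp(-\varepsilon^{-\alpha})$. You instead chain $\beta\mapsto G_\beta$ directly, whose random Lipschitz constant is $C F_0$ with $F_0=\int_I|B^H_t-B^H_{t'}|^{2p-2}|t-t'|^{-q}\,d\vec t$; since $(2p-2)H>q-1$ gives $E[F_0^j]<\infty$ for all $j$ (Minkowski's integral inequality plus Gaussian moments), a polynomial-size net and a Markov bound on $\{F_0>\delta^{-1}\}$ suffice. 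This yields only $P(\inf_\beta G_\beta\le\delta)\le C_j\delta^j$ for every $j$, a weaker tail than the paper's, but still amply enough for $(\det\gamma_{\mathbf V})^{-1}\in L^k$ for all $k$; in exchange your argument is somewhat simpler, avoiding any control of $\|DV_i\|_{\mathfrak H}$ and the auxiliary event $W_n$. All the individual steps you flag as delicate check out, so I see no gap.
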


\begin{proof}
Denote by $M=\left( \left\langle DV_{i},DV_{j}\right\rangle _{\mathfrak{H}%
}\right) $ the Malliavin matrix of $\mathbf{V}$. We want to show that $%
\left( \det M\right) ^{-1}\in L^{k}$, for any $k\geq 1$. Note that $\det
M\geq \gamma _{1}^{n}$, where $\gamma _{1}>0$ is the smallest eigenvalue of
the positive definite matrix $M$. Then it suffices to show that $\gamma
_{1}^{-1}\in L^{nk}$, for any $k\geq 1$, for which it is enough to estimate $%
P(\gamma _{1}\leq \varepsilon )$ for $\varepsilon $ small. We have 
\begin{equation}
\gamma _{1}=\inf_{\left\Vert \beta \right\Vert =1}\left( M\beta ,\beta
\right) =\inf_{\left\Vert \beta \right\Vert =1}\left\Vert D\left(
\sum_{i=1}^{n}\beta _{i}V_{i}\right) \right\Vert _{\mathfrak{H}}^{2}.
\label{gamma1}
\end{equation}%
For any $\beta =(\beta _{1},\dots ,\beta _{n})\in S^{n-1}$, let $\Psi
_{\beta }(\vec{t})=\sum_{i=1}^{n}\beta _{i}\varphi _{i}(\vec{t})$. Then, 
\begin{equation*}
D_{r}\left( \sum_{i=1}^{n}\beta _{i}V_{i}\right) =\left( 2p-1\right)
\int_{I}\Psi _{\beta }(\vec{t})\frac{\left\vert B_{t}^{H}-B_{t^{\prime
}}^{H}\right\vert ^{2p-2}}{\left\vert t-t^{\prime }\right\vert ^{q}}\mathbf{1%
}_{[t^{\prime },t]}(r)d\vec{t}.
\end{equation*}%
Applying (\ref{Qexpan}) in the computation of the norm (\ref{gamma1}) yields 
\begin{eqnarray*}
\left\Vert D\left( \sum_{i=1}^{n}\beta _{i}V_{i}\right) \right\Vert _{%
\mathfrak{H}}^{2} &=&\left( 2p-1\right) ^{2}\int_{0}^{1}dr\left(
\int_{I}\Psi _{\beta }(\vec{t})\frac{\left\vert B_{t}^{H}-B_{t^{\prime
}}^{H}\right\vert ^{2p-2}}{\left\vert t-t^{\prime }\right\vert ^{q}}\mathbf{1%
}_{[t^{\prime },t]}(r)d\vec{t}\right) ^{2} \\
&=&\left( 2p-1\right) ^{2}\sum_{i\geq 1}\lambda _{i}\left( \int_{I}\varphi
_{i}(\vec{t})\Psi _{\beta }(\vec{t})\frac{\left\vert B_{t}^{H}-B_{t^{\prime
}}^{H}\right\vert ^{2p-2}}{\left\vert t-t^{\prime }\right\vert ^{q}}d\vec{t}%
\right) ^{2} \\
&\geq &\left( 2p-1\right) ^{2}\sum_{i=1}^{n}\lambda _{i}q_{i}^{2},
\end{eqnarray*}%
where $q_{i}=\int_{I}\varphi _{i}(\vec{t})\Psi _{\beta }(\vec{t})\frac{%
\left( B_{t}^{H}-B_{t^{\prime }}^{H}\right) ^{2p-2}}{\left\vert t-t^{\prime
}\right\vert ^{q}}d\vec{t}$. The definition (\ref{GbetaDef}) implies $%
G_{\beta }=\sum_{i=1}^{n}\beta _{i}q_{i}$. Since $\lambda _{1}\geq \dots
\geq \lambda _{n}>0$, we obtain 
\begin{equation*}
\sum_{i=1}^{n}\lambda _{i}q_{i}^{2}\geq \lambda
_{n}\sum_{i=1}^{n}q_{i}^{2}\geq \lambda _{n}\sum_{i=1}^{n}\beta
_{i}^{2}q_{i}^{2}\geq \frac{\lambda _{n}}{n}G_{\beta }^{2},
\end{equation*}%
where in the third inequality we used the fact that $\sum_{i=1}^{n}a_{i}^{2}%
\geq \frac{1}{n}(\sum_{i=1}^{n}a_{i})^{2}$. Therefore 
\begin{equation}
\left\Vert D\left( \sum_{i=1}^{n}\beta _{i}V_{i}\right) \right\Vert _{%
\mathfrak{H}}^{2}\geq (2p-1)^{2}\frac{\lambda _{n}}{n}G_{\beta }^{2}.
\label{gamma2}
\end{equation}%
Combining (\ref{gamma1}) and (\ref{gamma2}) we have%
\begin{equation}
\gamma _{1}=\inf_{\left\Vert \beta \right\Vert =1}\left( M\beta ,\beta
\right) \geq (2p-1)^{2}\frac{\lambda _{n}}{n}\inf_{\left\Vert \beta
\right\Vert =1}G_{\beta }^{2}.  \label{gamma}
\end{equation}

For any $\varepsilon >0$ and $0<\alpha <\frac{1}{2H(p-1)}$, let 
\begin{equation*}
W_{\beta }=\left\{ G_{\beta }\geq \varepsilon \right\} ,
\end{equation*}%
and%
\begin{equation*}
W_{n}=\left\{ \left\Vert DV_{i}\right\Vert _{\mathfrak{H}}^{2}\leq \exp
\varepsilon ^{-\alpha },i=1,\dots ,n\right\} .
\end{equation*}%
On $W_{n}$, for any $\beta ,\beta ^{\prime }\in S^{n-1}$ we have 
\begin{equation*}
\left\vert \left( M\beta ,\beta \right) -\left( M\beta ^{\prime },\beta
^{\prime }\right) \right\vert \leq C_{n}\left\Vert \beta -\beta ^{\prime
}\right\Vert \exp \frac{1}{\varepsilon ^{\alpha }},
\end{equation*}%
where $C_{n}$ is a constant independent of $\beta ,\beta ^{\prime }$ and $%
\varepsilon $.

Note that we can find a finite cover $\cup _{i=1}^{m}B(\beta ^{i},\exp (-%
\frac{2}{\varepsilon ^{\alpha }}))$ of $S^{n-1}$ with $\beta ^{i}\in S^{n-1}$
and 
\begin{equation*}
m\leq C\exp \frac{2n}{\varepsilon ^{\alpha }}.
\end{equation*}%
Then on $W_{n}$, for any $\beta \in S^{n-1}$, there exists a $\beta ^{i}$
such that 
\begin{equation*}
\left( M\beta ,\beta \right) \geq \left( M\beta ^{i},\beta ^{i}\right)
-C_{n}\exp \frac{1}{\varepsilon ^{\alpha }}\exp (-\frac{2}{\varepsilon
^{\alpha }}).
\end{equation*}%
On $W_{\beta ^{i}}\cap W_{n}$, applying (\ref{gamma}) with $A_{n}=(2p-1)^{2}%
\frac{\lambda _{n}}{n}$ and taking $\varepsilon $ small enough, 
\begin{equation*}
\left( M\beta ,\beta \right) \geq A_{n}\varepsilon ^{2}-C_{n}\exp (-\frac{1}{%
\varepsilon ^{\alpha }})\geq \frac{A_{n}}{2}\varepsilon ^{2}.
\end{equation*}%
Hence, on $\cap _{i=1}^{m}W_{\beta ^{i}}\cap W_{n}$, 
\begin{equation}
\gamma _{1}=\inf_{\left\Vert \beta \right\Vert =1}\left( M\beta ,\beta
\right) \geq \frac{A_{n}}{2}\varepsilon ^{2}>0.  \label{mu0}
\end{equation}%
On the other hand, applying Lemma \ref{Gbeta}, we have 
\begin{eqnarray}
P(\cup _{i=1}^{m}W_{\beta ^{i}}^{c}) &\leq &\sum_{i=1}^{m}P(\cup
_{i=1}^{m}W_{\beta ^{i}}^{c})\leq m\sqrt{2}\exp (-\frac{C_{p,H}}{\varepsilon
^{1/2H(p-1)}})  \notag \\
&\leq &C\exp \frac{2n}{\varepsilon ^{\alpha }}\exp (-\frac{C_{p,\alpha }}{%
\varepsilon ^{1/2H(p-1)}})\leq C\exp (-\frac{C}{\varepsilon ^{1/2H(p-1)}}).
\label{mu1}
\end{eqnarray}%
Also, by Chebyshev's inequality, we can write%
\begin{equation}
P(W_{n}^{c})\leq C\exp (-\frac{1}{\varepsilon ^{\alpha }}).  \label{mu2}
\end{equation}%
Then it follows from (\ref{mu0})--(\ref{mu2}) that for $\varepsilon $ small, 
\begin{equation*}
P(\gamma _{1}<\frac{A_{n}}{2}\varepsilon ^{2})\leq C\exp (-\frac{1}{%
\varepsilon ^{\alpha }}).
\end{equation*}%
This completes the proof of the lemma.
\end{proof}

\begin{proof}[\textbf{Proof of Theorem }$\protect\ref{Main-Non-deg}$]
Note that 
\begin{equation}
\left\Vert DF\right\Vert _{\mathfrak{H}}^{2}=16p^{2}\sum_{i\geq 1}\lambda
_{i}V_{i}^{2}\geq 16p^{2}\lambda _{n}\sum_{i=1}^{n}V_{i}^{2},  \label{DF1}
\end{equation}%
for any integer $n$. Then, denoting $\left\vert \mathbf{V}\right\vert
^{2}=\sum_{i=1}^{n}V_{i}^{2}$ we have 
\begin{equation*}
P(\left\Vert DF\right\Vert _{\mathfrak{H}}<\varepsilon )\leq P\left(
\left\vert \mathbf{V}\right\vert <\frac{\varepsilon }{4p\sqrt{\lambda _{n}}}%
\right) .
\end{equation*}%
Since $\mathbf{V}=\left( V_{1},\dots ,V_{n}\right) $ is non-degenerate, then
it has a smooth density $f_{V_{n}}(x)$. Then we have 
\begin{equation*}
P\left( \left\vert \mathbf{V}\right\vert <\frac{\varepsilon }{4p\sqrt{%
\lambda _{n}}}\right) \leq C_{n,p}\varepsilon ^{n},
\end{equation*}%
where $C_{n,p}=\frac{2\pi ^{n/2}}{n\Gamma (\frac{n}{2})}\left( 4p\sqrt{%
\lambda _{n}}\right) ^{-n}\max_{\left\vert x\right\vert \leq 1}f_{V_{n}}(x)$%
. Now the theorem follows.
\end{proof}

\textbf{Acknowledgements} F. Lu would like to thank Jared Bronski for
helpful discussions.

\end{document}